\newtheorem{theorem}{Theorem}[section]
\theoremstyle{definition}
\newtheorem{remark}{Remark}[section]
\theoremstyle{definition}
\title{A remark on a construction of D.S.~Asche}
\author{Ferenc Sz\"oll\H{o}si}
\date{\today. Preprint. This research was supported in part by the Academy of Finland, Grant \#289002}
\address{F. Sz.: Department of Communications and Networking, Aalto University School of Electrical Engineering, P.O. Box 15400, 00076 Aalto, Finland}
\email{szoferi@gmail.com}
\begin{document}
\begin{abstract}
It is shown that a classical construction of D.S.~Asche of $72$ equiangular lines in the $19$ dimensional Euclidean space contains a subset of $54$ equiangular lines embedded in an $18$ dimensional subspace.
\end{abstract}
\maketitle

\section{Introduction and main results}
A set of $n$ lines, spanned by the unit vectors $v_1$, $\dots$, $v_n$ in a $d$-dimensional Euclidean space is called equiangular, if there is a common angle $\alpha\geq0$, such that for all $i\neq j$, $i,j\in\{1,\hdots,n\}$ we have $|\left\langle v_i, v_j\right\rangle|=\alpha$. The concept of equiangular lines was introduced in \cite{HAA}, and its theory was developed in \cite{LS}, \cite{vLS}. The maximum number of equiangular lines, denoted by $N(d)$, is known for $d\leq 13$, and in several sporadic dimensions, but for $d\geq 14$ the problem is wide open in general. For an up-to-date table with the most recent lower and upper bounds on $N(d)$, and for a list of additional references we refer the reader to \cite{GRWSN}, \cite{GRWSN2}.

The purpose of this note is to establish the following result.
\begin{theorem}\label{theorem1}
We have $N(18)\geq 54$.
\end{theorem}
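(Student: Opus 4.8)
The plan is to reformulate the problem in the language of Seidel matrices and then to locate the desired subconfiguration inside Asche's matrix by an explicit rank computation. First I would recall the standard dictionary between equiangular lines and Seidel matrices: a system of $n$ equiangular lines with common angle $\arccos\alpha$ spanning an $n$-dimensional span inside Euclidean space is described by its Gram matrix $G=I+\alpha S$, where $S$ is a \emph{Seidel matrix} of order $n$ (symmetric, zero diagonal, off-diagonal entries $\pm1$); here $G$ is positive semidefinite of rank exactly $d$, so that $-1/\alpha$ is the least eigenvalue of $S$ and its multiplicity equals $n-d$. Asche's construction furnishes such an $S$ of order $72$ with least eigenvalue $-5$ (hence $\alpha=1/5$) of multiplicity $72-19=53$. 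To prove the theorem it therefore suffices to exhibit a principal submatrix $S'$ of $S$ of order $54$ whose least eigenvalue $-5$ has multiplicity $54-18=36$: the associated Gram matrix $I+\tfrac15 S'$ then has rank $18$, so the corresponding $54$ lines are equiangular and lie in an $18$-dimensional subspace.

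Equivalently, writing the $72$ unit vectors as the columns $v_1,\dots,v_{72}$ of a matrix $V$ with $G=V^{\top}V$, I would search for a nonzero vector $w$ in their $19$-dimensional span that is orthogonal to at least $54$ of the $v_i$; the lines spanned by those $v_i$ then all lie in the hyperplane $w^{\perp}$, which meets the span in dimension $18$. In Gram-matrix terms this asks for a vector in the image of $G$ (the orthogonal complement of the $53$-dimensional kernel, i.e.\ the $(-5)$-eigenspace of $S$) whose support has size at most $18$. The search is guided by interlacing: deleting a single row and column of $S$ changes the multiplicity of $-5$ by at most one, so in passing from $72$ to $54$ points the multiplicity can fall from $53$ to no less than $35$, and the task is to delete $18$ points so that it lands exactly at $36$.

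Concretely I would reconstruct Asche's Seidel matrix explicitly from its description in the literature, exploit whatever automorphisms it carries in order to cut down the number of candidate $54$-subsets (testing, say, unions of orbits of a point stabiliser, or the set of vectors orthogonal to a highly symmetric choice of $w$), and then verify the candidate by computing the rank of the integer matrix $5I+S'$ in exact arithmetic. I expect the main obstacle to be the verification that the multiplicity is \emph{exactly} $36$ rather than merely at least $35$: interlacing controls only the inequality, while the precise value depends on the fine arithmetic of Asche's particular matrix. The crux is thus to pin down a subset realising equality and to certify the resulting rank $18$ rigorously by an exact rational or modular computation, rather than by a floating-point estimate of eigenvalues.
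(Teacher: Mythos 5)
Your reduction is sound, and it is in fact the same reduction the paper itself uses: producing $54$ equiangular lines with angle $\arccos(1/5)$ in $\mathbb{R}^{18}$ amounts to finding a principal submatrix $S'$ of order $54$ of Asche's Seidel matrix with $\mathrm{rank}(5I+S')=18$, or equivalently a nonzero vector $w$ in the $19$-dimensional span of the $72$ unit vectors that is orthogonal to $54$ of them. The paper's vector $m=2e_4-e_5-e_6+2e_7-e_8-e_{10}+2e_{17}-e_{18}-e_{20}-3e_{22}+3e_{23}$ is exactly such a $w$, and the paper's Remark makes your hyperplane picture explicit: the $18$ deleted vectors form two $9$-cliques lying on opposite sides of $m^{\perp}$. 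Your multiplicity bookkeeping ($53=72-19$ for Asche, target $36=54-18$) and the interlacing bound (multiplicity at least $35$ after $18$ deletions) are also correct.

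The gap is that your argument stops exactly where the theorem begins. Everything you establish holds for \emph{any} Seidel matrix of order $72$ whose least eigenvalue $-5$ has multiplicity $53$: interlacing only guarantees multiplicity at least $35$ in a $54$-point subset, and no general principle forces the existence of a subset attaining $36$. Whether such a subset exists inside Asche's particular configuration is precisely the content of the theorem --- it is the paper's sole contribution, namely the exhibition of $m$ --- and your proposal defers this step to an unexecuted computer search (``I would search \dots'', ``the crux is thus to pin down a subset''). A search strategy is not a proof: for the statement to be established you must actually produce the witness (the explicit $54$-subset, or the vector $w$) together with the finite verification that the corresponding Gram matrix is positive semidefinite of rank $18$ with all off-diagonal inner products of modulus $1/5$. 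As written, your text is a correct and well-motivated plan for rediscovering the paper's construction, but it proves nothing about $N(18)$.
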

\begin{proof}
Consider the extended binary Golay code as described in \cite[p.~719]{HANDBOOK}. The codewords are represented by the vectors generated by the rows of the $12\times 24$ matrix $\left[\begin{smallmatrix}I_6 & O & 0 & \textbf{1}\\ O & I_6 & \textbf{1}' & A\end{smallmatrix}\right]$, where $O$ is the $6\times 6$ zero matrix, $\textbf{1}$ and its transpose $\textbf{1}'$ are vectors of length $11$ having all entries equal to $1$, and $A$ is a $11\times 11$ circulant matrix with first row $[0,1,0,0,0,1,1,1,0,1,1]$. Let $\mathcal{C}$ denote the $759$-element subset of codewords of weight $8$. Let $e_i$ denote the standard basis vectors of $\mathbb{R}^{24}$ for $i\in\{1,\hdots,24\}$, and let $e_\Sigma:=\sum_{i=1}^{24}e_i$. For a codeword $x\in\mathcal{C}$ let $f(x):=(4x-4e_1-e_\Sigma)/\sqrt{80}$. Let $c_1:=e_2+e_3+e_{14}+e_{15}+e_{16}+e_{19}+e_{22}+e_{23}\in\mathcal{C}$, and let $c_2:=e_2+e_3+e_9+e_{11}+e_{12}+e_{13}+e_{21}+e_{24}\in\mathcal{C}$, and finally, let $m:=2e_4-e_5-e_6+2e_7-e_8-e_{10}+2e_{17}-e_{18}-e_{20}-3e_{22}+3e_{23}\in\mathbb{R}^{24}$. Then one readily verifies that the subset of all vectors $f(d)$, $d\in \mathcal{C}$, for which $\left\langle d,e_1\right\rangle=1$, and $\left\langle f(d),4e_1+e_\Sigma\right\rangle=\left\langle f(d),e_1-e_2\right\rangle=\left\langle f(d),e_1-e_3\right\rangle=\left\langle f(d),c_1\right\rangle=\left\langle f(d),c_2\right\rangle=\left\langle f(d),m\right\rangle=0$ forms an equiangular line system of $54=3\cdot 18$ lines in $\mathbb{R}^{18}$ with common angle $1/5$.
\end{proof}
The proof of Theorem~\ref{theorem1} is based on a construction of D.S.~Asche who exhibited $72$ equiangular lines in $\mathbb{R}^{19}$. Indeed, the only contribution of this paper is presenting the vector $m(c_1,c_2)$ which then removes $18$ out of these $72$ lines. Asche's construction is described in D.~Taylor's thesis \cite[p.~124]{TTH} (a more accessible reference is \cite[Example~5.19]{GRWSN2}), who remarked that $N(18)\geq 48$ follows. In an independent construction S.~Snover used the residual of a Steiner triple system on $19$ symbols to show the lower bound $N(17)\geq 48$. See \cite{LS} regarding the historical remarks, and \cite[Example~5.18]{GRWSN2} for an explicit construction.
\begin{remark}
We offer the following geometric interpretation of the construction described in the Proof of Theorem~1.1. From the $72$ equiangular lines (represented by unit vectors) coming from Asche's construction, we remove two $9$-subset of unit vectors which form two disjoint cliques aligned in the `right' way. More precisely, we remove a subset of $18$ vectors $u_i$ and $v_i$, $i\in\{1,\dots,9\}$ for which the following conditions hold: $\left\langle u_i,u_j\right\rangle=\left\langle v_i,v_j\right\rangle = 1/5$, $i\neq j$, $i,j\in\{1,\dots,9\}$ and in addition for every vector $u_i$, $i\in\{1,\dots,9\}$ there are exactly two distinct vectors $v_j\neq v_k$, $j,k\in\{1,\dots,9\}$ such that $\left\langle u_i,v_j\right\rangle=\left\langle u_i,v_k\right\rangle=1/5$ (and the other way around). We observe that the two cliques are on either side of the $1$-codimensional hyperplane orthogonal to $m$, as $\left\langle u_i,m\right\rangle=6/\sqrt{5}$ whereas $\left\langle v_i,m\right\rangle =-6/\sqrt{5}$ for every $i\in\{1,\dots,9\}$.
\end{remark}

We conclude this note with three further observations.

First, the configuration given in the Proof of Theorem~\ref{theorem1} is not extendible to a larger set of equiangular lines in $\mathbb{R}^{18}$.

Secondly, the Seidel matrix $S$ corresponding to this newly constructed equiangular line system (with $[S]_{i,j}:=5\left(\left\langle v_i,v_j\right\rangle-\delta_{ij}\right)$, where $i,j\in\{1,\dots, 54\}$, see \cite{SZO}) has spectrum
\begin{equation*}
\Lambda(S)=\{[-5]^{36}, [7]^6, [11]^8, [13]^2, [12-\sqrt{37}]^1, [12+\sqrt{37}]^1\}
\end{equation*}
(exponents denote multiplicities), and automorphism group size of $|\mathrm{Aut}(S)|=216=4\cdot 54$.

Finally, amongst those sub-Seidel matrices of $S$, which are of order $n\in\{50,\dots,53\}$ there is a unique one (up to equivalence) with integral spectrum. The spectrum of this Seidel matrix $T$ of order $52$ reads: $\Lambda(T)=\{[-5]^{34}, [3]^1, [5]^1, [7]^6, [11]^7, [13]^2, [17]^1\}$. 

\end{document}